\newcommand\lws{\textbf{Ia}}
\newcommand\rws{\textbf{Ib}}
\newcommand\tbt{\textbf{II}}
\newcommand\twt{\textbf{IV}}
\newtheorem{thm}{Theorem}
\newtheorem*{crl}{Corollary}
\theoremstyle{remark}
\newtheorem{rmk}{Remark}
\theoremstyle{definition}
\newtheorem{dfn}{Definition}
\title{The Decomposition Algorithm of Skew-symmetrizable Exchange Matrices}
\author{Weiwen Gu\\
\small A521 Wells Hall\\
\small Michigan State University\\
\small East Lansing, MI, 48824\\
\small guweiwen@msu.edu}
\date{}
\begin{document}
\maketitle
\begin{abstract}
Some skew-symmetrizable integer exchange matrices are associated to ideal (tagged) triangulations of marked bordered surfaces. These exchange matrices admits unfoldings to skew-symmetric matrices. We develop an combinatorial algorithm that determines if a given skew-symmetrizable matrix is of such type. This algorithm generalizes the one in \cite{WG}. As a corollary, we use this algorithm to determine if a given skew-symmetrizable matrix has finite mutation type.
\end{abstract}
\section{Introduction}

With some triangulations of surfaces invariant under finite group symmetries, we associate finite oriented multi-graph without
loops and $2$-cycles. Such graphs are called \textit{quivers}. Positive integer weights are assigned to quivers. Such graphs are also associated with matrices with integer entries. We call such graphs or their associated matrices \textit{$s$-decomposable} (see Definition \ref{sdecom}.) Quiver \textit{mutation} is defined in \cite{FST}. The collection of mutation-equivalent quivers to a given quiver $G$ is called the mutation class of $G$. We say a quiver is \textit{mutation finite} or has \textit{finite mutation type} if its mutation class is finite.

A quiver or its associated skew-symmetric matrix is said to be block-decomposable if the quiver can be obtained by combining pieces of graphs isomorphic to six types of quivers, called elementary blocks, by a particular way of gluing (see Definition \ref{gluingrules}). It is proved in \cite{FST1} that a quiver has finite mutation type if and only if it is either block-decomposable or is of one of the 11 exceptional types. It is also proved in \cite{FST2} that a quiver is block-decomposable if and only if it is the associated adjacency graph of an ideal triangulations of a bordered surface with marked points.

An $n\times n$ integer matrix $B$ is said to be \textit{skew-symmetrizable} if there exists an $n\times n$ integer diagonal matrix $D$ such that $BD$ is skew-symmetric. Mutation and mutation class are also defined for skew-symetrizable matrices. Our goal is to establish an combinatorial algorithm that determines if a given oriented graph whose edges are equipped with integer weights is $s$-decomposable, thus providing a tool to find if its associated skew-symmetrizable matrix has finite mutation type. Each skew-symmetrizable exchange matrix is associated to a diagram with oriented edges equipped with positive integer weights. The notion of $s$-decomposability is a generalization of \textit{block-decomposability} for diagrams (see Definition \ref{sdecom} and Table \ref{BOU}). A skew-symmetrizable exchange matrix is said to be $s$-decomposable if it can be obtained by gluing both elementary blocks and 7 additional blocks by the rules in Def. \ref{gluingrules} and Def. \ref{sdecom}.

In \cite{WG}, we provided an algorithm linear in the size of quiver $G$ that determines if $G$ is block-decomposable. As a corollary, we obtained for any skew-symmetric integer matrix $B$, an algorithm linear in the size of $B$ determining if $B$ has finite mutation type. The algorithm we describe in this article is a generalization of the one in \cite{WG}. This paper is inspired by \cite{FST}, in which the authors generalize the result of \cite{FST1} to skew-symmetrizable \textit{exchange matrices}. The following results are proved in \cite{FST}:
 \begin{enumerate}
 \item There is a one-to-one correspondence between $s$-decomposable skew-symmetrizable graphs with fixed block decomposition and ideal tagged triangulations of marked bordered surfaces with fixed tuple of conjugate pairs of edges. Conjugate edges are two edges inside a digon (or monogon) with one of them tagged and the other untagged.
 \item A skew-symmetrizable $n\times n$ matrix, $n\geq3$, that is not skew-symmetric, has finite mutation class if and only if diagram is either $s$-decomposable or mutation-equivalent to one of seven exceptional types.
 \item Any $s$-decomposable diagram admits an \textit{unfolding} (see Definition \ref{unfoldingdef}) to a diagram associated to ideal tagged triangulation of a marked bordered surface. Any mutation-finite matrix with non-decomposable diagram admits an unfolding to a mutation-finite skew-symmetric matrix.
 \end{enumerate}
   According to the theorems above, if $G$ is the diagram associated to a skew-symmetrizable exchange matrix $M$, and $T$ is the ideal tagged triangulation corresponding to a particular $s$-decomposition $G_{dec}$ of $G$, then an unfolding to $G_{dec}$ defines a skew-symmetric diagram obtained by gluing of unfoldings of corresponding blocks. We design an algorithm that determines if a given graph is $s$-decomposable, and for each possible decomposition, finds the associated ideal tagged triangulation of bordered surface with marked points. In order to determine if a given skew-symmetrizable matrix has finite mutation type, it remains to check if it is of one of the 11 (for skew-symmetric) or 7 (for skew-symmetrizable) types. Since this requires a bounded number of operations, we obtain a linear algorithm. Moreover, this algorithm is linear in the size of the given matrix.

\section{Definitions}\label{definitions}
In this section, we introduce definitions  and a brief description of the algorithm. For convenience, we denote an edge that connects nodes $x,y$ by $\overline{xy}$ if the orientation of this edge is unknown or irrelevant, $\overrightarrow{xy}$ if the edge is directed from $x$ to $y$, and $\overleftarrow{xy}$ otherwise.
\begin{dfn}\label{gluingrules}
We recall that a diagram (or graph) is \textit{block-decomposable} (or \textit{decomposable}) if it is obtained by gluing elementary blocks of Table \ref{Decomposition} by the following \textit{gluing rules}:
\begin{enumerate}
\item Two white nodes of two different blocks can be identified. As a result, the graph becomes a union of two parts; the common node is colored black. A white node can neither be identified to itself nor with another node of the same block.
\item A black node can not be identified with any other node.
\item If two white nodes $x$, $y$ of one block (endpoints of edge $\overleftarrow{xy}$) are identified with two white nodes $p$, $q$ of another block (endpoints of edge $\overleftarrow{pq}$), $x$ with $p$, $y$ with $q$ correspondingly, then two parallel edges of the same direction is formed, and nodes $x=p$, $y=q$ are black.
\item If two white nodes $x$, $y$ of one block (endpoints of edge $\overleftarrow{xy}$) are identified with two white nodes $p$, $q$ of another block (endpoints of edge $\overleftarrow{pq}$), $x$ with $q$, $y$ with $p$ correspondingly, then both edges are removed after gluing, and nodes $x=q$, $y=p$ are black.
\end{enumerate}
\begin{table}[btch]
\centering
\begin{tabular}{cccccc}
&&&&&\\
\includegraphics{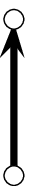}& \includegraphics{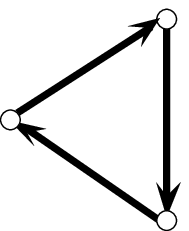}& \includegraphics{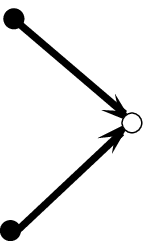} &\includegraphics{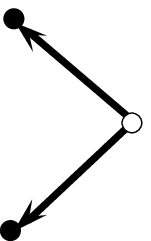} & \includegraphics{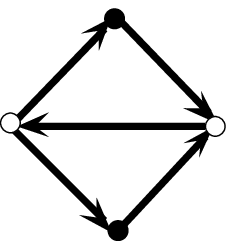}&\includegraphics{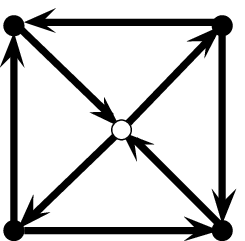}\\&&&&&\\
\textbf{Spike}&\textbf{Triangle}&\textbf{Infork}&\textbf{Outfork}&\textbf{Diamond}&\textbf{Square}
\end{tabular}
\caption{Elementary Blocks}\label{Decomposition}
\end{table}
\end{dfn}
\begin{dfn}
Let $B(G) = (b_{ij})$ be the skew-symmetric matrix whose rows and columns are labeled by the vertices of $G$, and whose entry $b_{ij}$ is equal to the number of edges going from $i$ to $j$ minus the number of edges going from $j$ to $i$. We say $B(G)$ is the \textit{adjacency matrix associated to $G$}, if an matrix $B=B(G)$, we say $G$ is the \textit{oriented adjacency graph associated to $B$}.
\end{dfn}
\begin{rmk}
By definition, the associated matrix to a oriented graph is skew-symmetric.
\end{rmk}

One of the applications of our algorithm involves mutations of cluster algebra, which requires the following definitions:

\begin{dfn}
A \textit{seed} is a pair $(f,B)$, where $f = {f_1,\ldots, f_n}$ form a collection of algebraically
independent rational functions of $n$ variables $x_1,\ldots, x_n$, and $B$ is a skew-symmetrizable
matrix. The part $f$ of seed $(f,B)$ is called \textit{cluster}, elements $f_i$ are called \textit{cluster variables},
and $B$ is called \textit{exchange matrix}.
\end{dfn}

\begin{dfn}
A quiver is a finite oriented multi-graph without loops and 2-cycles. Assume $B=(b_{ij})$ is the skew-symmetric matrix associated to a quiver $G$. We say that an $\overline{B}=(\overline{b}_{ij})$ is obtained from $B$ by \textit{matrix mutation} in direction $k$, and write $\overline{B}=\mu_k(B)$, if the entries of $\overline{B}$ are given by

\begin{equation*}
\overline{b}_{ij}=\begin{cases}
-b_{ij}, & \text{if $i=k$ or $j=k$,}\\
b_{ij}+\frac{1}{2}(|b_{ik}|b_{kj}+b_{ik}|b_{kj}|), & \text{otherwise.}
\end{cases}
\end{equation*}

Two matrices are called \textit{mutation-equivalent} if they can be transformed into each other by a sequence of mutations.
\end{dfn}

\begin{dfn}
A \textit{diagram} (or \textit{graph}) $S$ associated to a skew-symmetrizable integer matrix $B$ is an oriented graph with weighted edges obtained in the following way: Suppose $B=(b_{ij})_{i,j=1}^n$. Vertices of $S$ are labeled by $[1,\ldots, n]$. If $b_{ij} > 0$, we join vertices $i$ and $j$ by an edge directed from $i$ to $j$ and assign to this edge weight $-b_{ij}b_{ji}$.
\end{dfn}

It is shown in \cite{FZ2} that mutation of exchange matrices induce mutations of diagram. If $S$ is the diagram associated to $B$, and $B'=\mu_k{S}$ is a mutation of $B$ in direction $k$. We change the weigh in the way describe in Figure \ref{mutation}.

\begin{figure}
\centering
\input{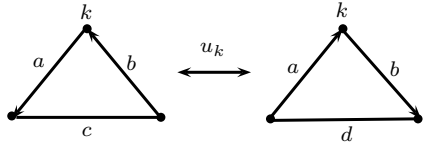}
\caption{Mutation of diagrams}\label{mutation}
\end{figure}

Here $\pm\sqrt{c}\pm\sqrt{d}=\sqrt{ab}$. The coefficient of $\sqrt{c}$ (resp. $\sqrt{d}$) is $1$ if the three edges form an oriented triangle and $-1$ otherwise. If $ab=0$, then neither value of $c$ nor orientation of the corresponding edges changes. Mutation class of an exchange matrix can be extended to mutation class of the associated diagram. Although the order of them may not be the same, it can be shown that mutation class of a matrix is finite if and only if a mutation class of the corresponding diagram is finite.

It is proved in \cite{FST1} that an skew-symmetric exchange matrix has finite mutation type if it is associated to a
decomposable graph or one of the 11 exceptional types. To generalize the results to skew-skymmetrizable matrix, we need the following definitions:

\begin{dfn}\label{unfoldingdef}
The \textit{unfolding} procedure is defined as follows (see section 4 in \cite{FST}): Suppose that we have a chosen disjoint index sets: $E_1,E_2,\ldots,E_n$, with $|E_i|=d_i$. Denote $m=\sum_{i=1}^nd_i$. To each matrix $B'$ mutation-equivalent to a given skew-symmetrizable $m\times m$ matrix $B$, a skew-symmetric matrix $\widehat{B}'$ indexed by $\bigcup_{i=1}^nE_i$ is defined so that the following conditions are satisfied:
\begin{enumerate}
\item the sum of entries in each column of each $E_i\times E_j$ block of $\widehat{B}'$ equals to $b_{ij}$;
\item if $b_{ij}\geq0$, then the $E_i\times E_j$ block of $\widehat{B}'$ has all entries non-negative.
\end{enumerate}
Define a composite mutation $\widehat{\mu}_i=\prod_{j\in E_i}\mu_j$ on $\widehat{B}'$. If $C$ is the skew-symmetric matrix constructed from $B$ satisfying the above conditions, we say $C$ is an unfolding if for any $B'$ mutation-equivalent to $B$, $\widehat{\mu}_i(\widehat{B}')=\widehat{\mu(B')}$.
\end{dfn}

\begin{dfn}\label{sdecom}
If a graph $G$ can be obtained by gluing both elementary blocks and new blocks in Table \ref{BOU} by the gluing rules in Definition \ref{gluingrules} and the following new rules, we say the graph is \textit{$s$-decomposable}:
\begin{enumerate}
\item If the graph has multiple edges containing $n$ parallel edges, replace the multiple edge by an edge of weight $2n$. For example, if we glue two parallel spikes of the same direction, we get an edge of weight 4 (see Figure \ref{ME4}).
    \begin{figure}[hctb]
    \centering
    \begin{minipage}[c]{0.5\linewidth}
    \centering
    \includegraphics[width=0.6\linewidth]{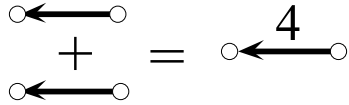}
    \caption{Edge of Weight 4}\label{ME4}
    \end{minipage}
    \end{figure}
\item All single edges have weight 1.
\end{enumerate}
\begin{table}
\begin{center}
\caption{Blocks of Unfolding}\label{BOU}
\begin{tabular}{cccc}
&&&\\
&{\large \textbf{New Blocks}}&{\large \textbf{Unfolding}}&{\large \textbf{Triangulation}}\\
\hline
{\large \textbf{Ia:}}&\input{lws}&\input{uflws}&\input{sflws}\\
{\large \textbf{Ib:}}&\input{rws}&\input{ufrws}&\input{sfrws}\\
{\large \textbf{II:}}&\input{tbt}&\input{uftbt}&\input{sftbt}\\
{\large \textbf{IIIa:}}&\input{lwd}&\input{uflwd}&\input{sflwd}\\
{\large \textbf{IIIb:}}&\input{rwd}&\input{ufrwd}&\input{sfrwd}\\
{\large \textbf{IV:}}&\input{twt}&\input{uftwt}&\input{sftwt}\\
{\large \textbf{V:}}&\input{sq}&\input{ufsq}&\input{sfsq}\\
\hline
\end{tabular}
\end{center}
\end{table}
\end{dfn}
\begin{rmk}
Suppose $G$ is associated to a skew-symmetric matrix. If $G$ is obtained by gluing blocks by the above rules, $G$ is associated to a ideal tagged triangulation of bordered surface with marked points obtained by the gluing of the pieces of surfaces associated to the blocks, two arcs are glued together iff the corresponding nodes are glued together in the associated blocks (see \cite{FST2}). Suppose $G$ is associated to a skew-symmetrizable matrix. If $G$ is s-decomposable, it is associated with triangulation of bordered surfaces with orbifold points. (see \cite{FST1})
\end{rmk}


\begin{rmk}\label{simple}
According to the above rules,  the weight of any edge in a decomposable graph can only be $1,2$ or $4$. All edges of weight $2$ can only be obtained from blocks in Table \ref{BOU}. Moreover, since all edges of weight 2 contains at least one black endpoint, we can never obtain an edge of weight 4 from edges of weights 2. Moreover, an edge of weight 4 can only be obtained from \twt or by Figure \ref{ME4}.
\end{rmk}

  A geometric interpretation of mutations on the blocks in Table \ref{BOU} is given in Lusztig's \cite{L}.
Abusing the notation, we say the new blocks are the \textit{foldings} of their corresponding unfoldings. Each unfolding represents an ideal tagged triangulation of bordered surface with marked points (see pictures in \cite{FST}, (Table 7.1)). Each of these unfoldings except the last one corresponds to triangulations with two conjugate edges inside a digon (or monogon). Conjugate edges represent the same vertex in the foldings. Mutation of the folded vertex corresponds to the flips of both edges in the conjugate pair. \textit{Composite flip} of the triangulation corresponding to an unfolding diagram is defined as a collection of flips in all edges that represent vertices in the set $E_i$. Note that any two flips in a composite flip commute, see Figure \ref{flip}.

\begin{figure}[bcth]
\centering
\input{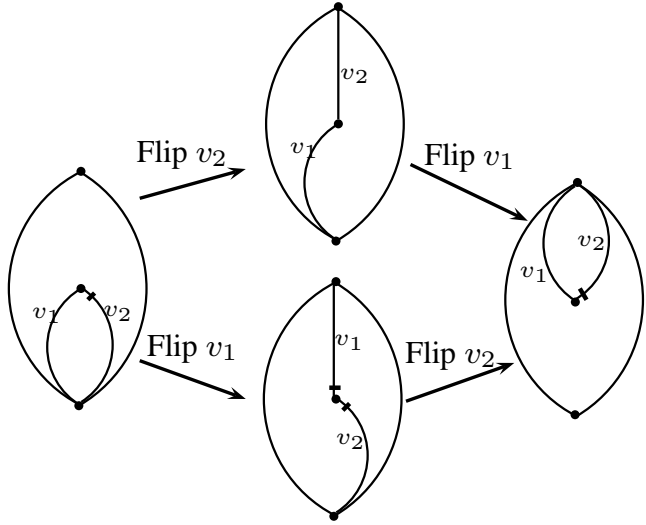}
\caption{Composite Flip}\label{flip}
\end{figure}


  In \cite{FST}, the following theorem is proved:
\begin{thm}\label{thm_unfolding}
Any s-decomposable diagram admits an unfolding to a diagram arising from ideal tagged triangulation of a marked bordered surface. Any mutation-finite
matrix with non-decomposable diagram admits an unfolding to a mutation-finite skew-symmetric matrix.
\end{thm}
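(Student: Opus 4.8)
The plan is to construct the unfolding $\widehat{G}$ blockwise from a chosen $s$-decomposition of $G$, verify that the result meets the conditions of Definition~\ref{unfoldingdef}, and read the associated ideal tagged triangulation directly off the ``Triangulation'' column of Table~\ref{BOU}. Concretely: fix an $s$-decomposition of $G$ into elementary blocks and new blocks $B_1,\dots,B_k$. To each vertex $x$ of $G$ assign an index set $E_x$, of size $1$ or $2$, with size $2$ precisely when $x$ occurs as the ``doubled'' outer vertex (the black $\circ$-node in the middle column of Table~\ref{BOU}) of some block containing it. Replace each $B_i$ by its prescribed unfolding $\widehat{B_i}$ --- for an elementary block one simply takes $\widehat{B_i}=B_i$ with all relevant $|E_j|=1$ --- and glue the diagrams $\widehat{B_i}$ along the preimages of the glued vertices of $G$, applying the gluing rules of Definition~\ref{gluingrules} to every representative. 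The result is the candidate skew-symmetric diagram $\widehat{G}$, together with the index sets $E_x$ and $m=\sum_x |E_x|$.

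Next I would check that $(\widehat{G},\{E_x\})$ is an unfolding of $G$. Conditions (1) and (2) of Definition~\ref{unfoldingdef} --- each $E_i\times E_j$ block of $\widehat{B}$ has constant column sum $b_{ij}$, with non-negative entries when $b_{ij}\ge 0$ --- are a finite verification: one inspects the seven new blocks of Table~\ref{BOU} and the six elementary blocks, and then notes that the contributions of distinct blocks sharing a given pair of vertices add, so column sums combine correctly. The substantive requirement is compatibility with mutation, $\widehat\mu_i(\widehat{B}') = \widehat{\mu_i(B')}$ for every $B'$ mutation-equivalent to $B$. Here I would use that admitting an $s$-block decomposition is preserved throughout the mutation class, so that both sides of the identity are built blockwise; a mutation of $B'$ at a vertex $x$ then lifts to the composite mutation $\widehat\mu_x=\prod_{j\in E_x}\mu_j$, and when $|E_x|=2$ the two flips commute (Figure~\ref{flip}), yielding exactly the unfolding of $\mu_x(B')$. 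This reduces the claim to mutations localized inside a single block, or across a pair of adjacent blocks, which is the finite case analysis behind \cite{FST1,FST2}.

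For the geometric assertion, each $\widehat{B_i}$ is by construction the signed adjacency quiver of the triangulated surface piece in the ``Triangulation'' column of Table~\ref{BOU}, and the block gluing corresponds to gluing these pieces along boundary arcs, two arcs being identified exactly when the corresponding outer vertices are identified; by \cite{FST2} the glued quiver $\widehat{G}$ is then the adjacency quiver of the resulting ideal tagged triangulation, with the doubled vertices ($|E_x|=2$) recording conjugate pairs of arcs inside digons or monogons. For the second sentence, one invokes the classification (established in \cite{FST}) that a mutation-finite skew-symmetrizable matrix whose diagram is non-decomposable is mutation-equivalent to one of the seven exceptional diagrams; for each of these one exhibits by hand a mutation-finite skew-symmetric unfolding via classical folding data (for instance $F_4$ unfolding to $E_6$, the $G_2$-type unfolding to $D_4$, and their affine and extended analogues) and checks the unfolding axioms directly. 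I expect the mutation-compatibility step to be the main obstacle: one must control how $s$-block decompositions transform under arbitrary sequences of mutations and ensure that the lift of each mutation is a genuine composite mutation, which is the technical heart of the argument.
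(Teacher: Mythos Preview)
The paper does not prove this theorem at all: immediately before the statement it writes ``In \cite{FST}, the following theorem is proved,'' and then simply quotes the result. There is no argument in the present paper to compare your proposal against; Theorem~\ref{thm_unfolding} (and likewise Theorems~\ref{thm_decomposable} and~\ref{thm_mutation_finite}) are imported wholesale from Felikson--Shapiro--Tumarkin and used as black boxes to justify the algorithm in Section~\ref{algorithm}.

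Your sketch is a plausible outline of how the argument in \cite{FST} actually runs --- blockwise unfolding, finite verification of the column-sum conditions, mutation compatibility via commuting composite flips, and a separate case check for the seven exceptional types --- but be aware that you have not been asked to reproduce that proof here, and you correctly identify its hard step: showing that the lifted composite mutation $\widehat\mu_i$ tracks the diagram mutation $\mu_i$ for \emph{every} $B'$ in the mutation class, not just for the initial decomposition. Your reduction ``to mutations localized inside a single block, or across a pair of adjacent blocks'' is where the real work in \cite{FST} lies, and it is not a routine verification; if you intend to actually prove the theorem rather than cite it, that is the place where a genuine argument is still owed.
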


  Given an $s$-decomposable diagram with a fixed decomposition, there is a unique tagged triangulation of a marked bordered surfaces with chosen tuples of conjugate pairs of edges. This surface can be obtained by gluing pieces of surfaces representing unfoldings of corresponding blocks in the decomposition along edges corresponding to glued white vertices. The construction is invariant under mutation: mutating the diagram means performing composite flips to the original triangulations. Furthermore, the following theorems are proved in \cite{FST}:
\begin{thm}\label{thm_decomposable}
There is a one-to-one correspondence between s-decomposable skew-symmetrizable diagrams with fixed block decomposition and ideal tagged triangulations of marked bordered surfaces with fixed tuple of conjugate pairs of edges.
\end{thm}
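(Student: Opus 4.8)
The plan is to build the correspondence explicitly in both directions and then check the two constructions are mutually inverse, reducing everything to a local, block-by-block analysis and bootstrapping from the already-known skew-symmetric case of \cite{FST2}. For the forward direction, given an $s$-decomposable diagram $G$ together with a fixed decomposition $G=G_1\cup\cdots\cup G_m$ into elementary blocks (Table \ref{Decomposition}) and new blocks (Table \ref{BOU}), I would attach to each block $G_i$ the surface-with-partial-triangulation $(S_i,T_i)$ recorded in the rightmost column of the tables. Whenever two white nodes are identified in the gluing, glue the pieces $S_i$ and $S_j$ along the boundary arcs labeled by those nodes; gluing rules (3)--(4) of Definition \ref{gluingrules} match exactly the two ways two boundary arcs can be identified. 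The result is a marked bordered surface, possibly carrying orbifold points contributed by the new blocks, together with an ideal tagged triangulation $T(G)$; the tuple of conjugate pairs is read off as the pairs of tagged/untagged edges inside the digons and monogons sitting inside the new blocks. One then verifies that the signed adjacency matrix of $T(G)$ equals $B(G)$: this is a local check, since the signed adjacency data and the diagram both depend only on how arcs (resp.\ edges) meet within a triangle or within a glued pair of pieces, and for each block in the tables the displayed diagram is precisely the signed adjacency diagram of its surface piece.

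For the backward direction, given a triple $(\mathbf{S},T,\text{conjugate pairs})$, I would cut $\mathbf{S}$ along the interior arcs of $T$, splitting it into ideal triangles, self-folded triangles, and the punctured or orbifold digon/monogon pieces that house the chosen conjugate pairs, and then group these elementary pieces according to the combinatorial types appearing in the two tables. Each group is the surface piece of exactly one block type; reading off the blocks and re-gluing the corresponding diagram blocks along the arcs that were cut produces a diagram $G(T)$, which is $s$-decomposable by construction and comes with a distinguished block decomposition whose conjugate-pair data is the prescribed tuple. The nontrivial point here is the classification underlying this step: one must check that, once the tuple of conjugate pairs is fixed, every local configuration of $T$ around an arc (or around an orbifold point, a pair of orbifold points, or a puncture) is isomorphic to one of the finitely many pieces in Tables \ref{Decomposition} and \ref{BOU}, i.e.\ that the six elementary pieces together with the seven new pieces exhaust the building blocks of tagged triangulations of orbifold surfaces.

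Both maps $G\mapsto(\mathbf{S},T,\ldots)$ and $(\mathbf{S},T,\ldots)\mapsto G$ are defined piece by piece, and on each block they are visibly inverse to one another (the surface piece of a block determines the block up to the isomorphisms permitted inside a decomposition, and conversely), so the two compositions are the identity and we obtain the asserted bijection. To streamline this and to avoid re-deriving the unorbifolded case, I would run the whole argument through the unfolding: by \cite{FST2} the statement already holds for block-decomposable (skew-symmetric) diagrams and ideal triangulations of ordinary marked bordered surfaces, so it suffices to match the "folding" operation on diagrams --- replacing each unfolding in the middle column of Table \ref{BOU} by its block --- with the operation on surfaces that sends a triangulated surface carrying the appropriate finite symmetry to its quotient orbifold. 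The quotient of each unfolding-piece is exactly the corresponding block-piece, and these quotients glue, so Theorem \ref{thm_unfolding} transports the skew-symmetric bijection to the skew-symmetrizable one. I expect the main obstacle to be precisely this last compatibility: one has to confirm that a global finite symmetry of the unfolded triangulation is assembled consistently from the local block symmetries, so that the quotient is a genuine orbifold triangulation rather than a degenerate object, and that the bookkeeping of edge weights --- only $1$, $2$, $4$ occur, with $2$ forced to touch an orbifold point and $4$ arising only as in \twt{} or as in Figure \ref{ME4} (Remark \ref{simple}) --- agrees on both sides under this quotient.
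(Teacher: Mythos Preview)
The paper does not contain a proof of this theorem: it is quoted verbatim as a result ``proved in \cite{FST}'' (Felikson--Shapiro--Tumarkin), alongside Theorems \ref{thm_unfolding} and \ref{thm_mutation_finite}, and is used here only as background for the algorithm. So there is nothing in the present paper to compare your argument against.

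As for the content of your sketch, the strategy is the standard one and is broadly in line with how \cite{FST} proceeds: associate to each block a surface piece, glue along white vertices, and conversely cut a tagged triangulation into elementary pieces. Two points deserve care if you intend to turn this into a self-contained proof. First, the backward direction hinges on the \emph{completeness} of the block list: you must show that every local configuration around an arc or orbifold point in a tagged triangulation of an orbifold surface is accounted for by exactly one block type in Tables \ref{Decomposition} and \ref{BOU}; this is a genuine case analysis, not a formality. Second, your plan to ``transport'' the bijection through unfolding presupposes Theorem \ref{thm_unfolding}, but in \cite{FST} the logical order is the reverse --- the block-by-block correspondence is established first and the unfolding is built from it --- so invoking unfolding here risks circularity unless you independently verify that the local symmetries of the unfolded pieces assemble into a global one.
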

\begin{thm}\label{thm_mutation_finite}
A skew-symmetrizable $n\times n$ matrix, $n\geq3$, that is not skew-symmetric,
has finite mutation class if and only if its diagram is either $s$-decomposable or mutation-
equivalent to one of seven types.
\end{thm}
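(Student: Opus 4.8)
The plan is to prove the two implications separately: the ``if'' direction by transporting known finiteness results through the unfolding construction, and the ``only if'' direction by a local-to-global analysis built on the fact that principal submatrices of a finite-mutation-type matrix are again of finite mutation type.

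For the ``if'' direction, suppose the diagram $S$ of $B$ is $s$-decomposable. By Theorem~\ref{thm_unfolding}, $B$ admits an unfolding $\widehat B$ whose diagram comes from an ideal tagged triangulation of a marked bordered surface, and such a skew-symmetric matrix has finite mutation class (the flip graph of a fixed surface produces only finitely many diagrams up to the relabelling induced by the mapping class group, i.e.\ block-decomposable skew-symmetric matrices are mutation-finite, as in \cite{FST1}). The defining property $\widehat{\mu_i(B')}=\widehat\mu_i(\widehat{B'})$ means that $B'\mapsto\widehat{B'}$ is well defined on the whole mutation class of $B$, and it is injective because $B'$ is recovered from $\widehat{B'}$ by summing the entries in each $E_i\times E_j$ block. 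Hence the mutation class of $B$ injects into that of $\widehat B$ and is finite. If instead $S$ is mutation-equivalent to one of the seven exceptional diagrams, finiteness is a single bounded computation; one checks at the same time that each of these seven is genuinely not skew-symmetric and not $s$-decomposable, so the two cases do not overlap.

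For the ``only if'' direction, let $B$ be skew-symmetrizable, not skew-symmetric, of order $n\ge 3$, with finite mutation class. First I would record the submatrix lemma: if $B'$ is obtained from $B$ by deleting a set $J$ of indices, then for $k\notin J$ the restriction of $\mu_k(B)$ to the complement of $J$ equals $\mu_k(B')$, so the mutation class of $B'$ embeds into the set of such restrictions of the (finite) mutation class of $B$, and $B'$ is again mutation-finite. Taking $|J|=n-2$ and $|J|=n-3$ reduces the problem to two inputs: the list of mutation-finite skew-symmetrizable diagrams of orders $2$ and $3$, and the requirement that all $\binom{n}{3}$ order-$3$ induced subdiagrams be simultaneously admissible. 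Classifying the order-$3$ case is a finite case analysis over the unordered triple of edge weights and the orientation; it yields the structural constraints that drive everything else: every edge of a mutation-finite diagram of order $\ge 3$ has weight in $\{1,2,3,4\}$, weight-$3$ edges occur only in the order-$2$ list and one exceptional order-$3$ diagram, and weight-$2$ and weight-$4$ edges occur only in the configurations realized by the blocks of Table~\ref{BOU}.

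The core of the argument is then local-to-global. Since $B$ is not skew-symmetric, its diagram carries at least one ``non-skew-symmetric feature'' (an edge of weight $2$, $3$ or $4$). Starting from such a feature, one enlarges an admissible induced subdiagram vertex by vertex, at each step using the order-$3$ list to force the surrounding structure; this shows the feature lies inside one of the blocks $\lws,\rws,\tbt,\lwd,\rwd,\twt,\hs$, that two distinct features can only meet along shared (black) vertices in the manner prescribed by Definition~\ref{sdecom}, and that the part of the diagram disjoint from all features is skew-symmetric and block-decomposable by the argument of \cite{WG}, \cite{FST1}. Whatever cannot be assembled this way is cornered, by eliminating each possible way the order-$3$ pieces could fit together, into finitely many mutation classes, and one verifies there are exactly seven of them and that they are pairwise inequivalent, a single representative per class sufficing. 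The main obstacle is precisely this exhaustive combinatorial elimination: controlling how a forbidden local configuration forces further structure under mutation, and proving completeness and irredundancy of the seven exceptional classes. The unfolding machinery and the submatrix lemma only set the stage; essentially all of the work lies in the case analysis.
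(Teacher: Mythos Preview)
The paper does not prove this theorem at all: it is quoted verbatim as a result of Felikson--Shapiro--Tumarkin \cite{FST}, and the paper's own contribution is the decomposition algorithm that \emph{applies} this theorem (see the sentence ``Furthermore, the following theorems are proved in \cite{FST}'' immediately preceding the statement, and the Corollary at the end). So there is no proof in the paper to compare your proposal against.

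That said, your outline is broadly the strategy of \cite{FST}: the ``if'' direction via unfolding to a surface-type skew-symmetric matrix, and the ``only if'' direction via the hereditary property of finite mutation type under taking principal submatrices, followed by a rank-$2$/rank-$3$ classification and a local-to-global assembly. Two cautions. First, your claim that ``weight-$3$ edges occur only in the order-$2$ list and one exceptional order-$3$ diagram'' is too strong as a global statement: the exceptional $G_2$-type families among the seven include diagrams of rank $>3$ carrying a weight-$3$ edge, so weight $3$ is not confined to small rank; what the rank-$3$ analysis actually gives you is a constraint on the local neighbourhood of a weight-$3$ edge, not a bound on the ambient rank. Second, you correctly flag that ``essentially all of the work lies in the case analysis,'' but you have not indicated how you would organize that analysis or why it terminates in exactly seven classes; in \cite{FST} this is the bulk of the paper, done via a careful minimal-non-decomposable-subdiagram argument together with computer verification of the exceptional mutation classes, and your sketch does not yet supply a substitute for either ingredient.
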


  By the previous theorems, to check that a given skew-symmetrizable non skew-symmetric matrix has finite mutation type, first it only takes finitely many operations to check if it is mutation-equivalent to one of the seven exceptional types of diagrams. If not, we can further check if the associated adjacency graph $G$ is $s$-decomposable. In below sections, we develop an algorithm linear in size of $G$ that determines if a given diagram is $s$-decomposable.

  For convenience, we need the following definition:
\begin{dfn}\label{nbhd}
   Suppose $N$ is a subgraph of $G$ with all its nodes colored white or black. If there exists another quiver $M$ with all its nodes colored white or black, such that $G$ can be obtained by gluing $M$ to $N$ by the rules in Definition \ref{gluingrules} and \ref{sdecom}, we say $N$ is a \textit{colored subgraph} of $G$. A \textit{neighborhood} of $o$ is a colored subgraph of $G$ that contains node $o$. We say a colored subgraph $N$ of $G$ is \textit{decomposable} if there exists an $s$-decomposable or block-decomposable graph $\widetilde{G}$ that contains $N$ as a colored subgraph. A colored subgraph $N$ of $G$ is said to be \textit{indecomposable} if any graph that contains $N$ as a colored subgraph is neither $s$-decomposable nor decomposable. We say a colored subgraph $N$ is \textit{$s$-decomposable as a subgraph} if $N$ can be obtained by gluing blocks by the rules in Definition \ref{gluingrules} and \ref{sdecom}, and the resulting color of nodes in $N$ coincides with the original color of vertices in $N$.
\end{dfn}
\begin{rmk}\label{mnbhd}
Note that if for a vertex $o$ of graph $G$, the whole graph $G$ is obtained by gluing a colored subgraph to a neighborhood $N$ of $o$, no edge in $N$ can be annihilated by gluing procedure. For a given graph $G$ and a selected node $o$, the set of neighborhoods of $o$ in $G$, denoted by $\mathcal{N}_o$ is
a partially ordered set by inclusion. We define three subsets of $\mathcal{N}_o$ as follows:
\begin{itemize}
\item $\mathcal{I}_o$ is the set of all decomposable neighborhoods each of which contains all edges incident to $o$.
\item $\mathcal{D}_o$ is the set of all decomposable neighborhoods of $o$ each of which is decomposable as a subgraph.
\item $\mathcal{S}_o=\{N\subset\mathcal{I}_o\cap\mathcal{D}_o\mbox{ }|\mbox{ $N$ is minimal}\}$.
\end{itemize}

  If $\mathcal{S}_o$ is empty, then by definition, the graph containing $o$ is not $s$-decomposable.
\end{rmk}

  Our goal is to find a combinatorial algorithm which determines if a given graph is $s$-decomposable.
According to remark \ref{simple}, we can determine if a graph contains blocks from Table \ref{BOU} by locating edges of weight 2 and analyzing their neighborhoods. We differ cases by the number of edges of weight $2$ that are incident to the considered node $o$. Denote this number by $n$. According to the rules of gluing and Table \ref{BOU}, $n$ is at most $4$ for any node in an $s$-decomposable graph. If none of the edges incident to node $o$ has weight two, $o$ can not be contained in a block from Table \ref{BOU} and we skip node $o$. Therefore, $n=1,2,3$ or $4$. Starting with any node $o$ with $n=4$, we check if $\mathcal{S}_o$ is non-empty by examining the following information that can be directly observed from the graph: degree of $o$, degree of the nodes that are connected to $o$ by one edge, and the number and directions of the edges between node $o$ and the nodes connected to $o$. If $\mathcal{S}_o$ is empty, the graph is not $s$-decomposable. If $o$ is contained in a decomposable neighborhood in $\mathcal{S}_o$, we replace the neighborhood by another one which is \textit{consistent} in the sense that the new graph is $s$-decomposable if and only if the original one is. The new neighborhood does not contain any edge of weight 2. After all nodes with $n=4$ are exhausted, we proceed to the nodes with $n=3,2,1$ (in decreasing order). Finally, we get a graph that contains only edges of weight $1$ and $4$. The new graph is $s$-decomposable if and only if it is decomposable (see \cite{WG}). Then we apply the algorithm from \cite{WG} to determine it the graph is decomposable. Since all replacements are consistent, we can determine if the original graph is $s$-decomposable.

\section{Algorithm}\label{algorithm}
In \cite{WG}, it is proved that we can assume that the graph is connected when only blocks in Table \ref{Decomposition} are used. If the graph is $s$-decomposable, it is easy to see that we can make the same assumption as well. In fact, except \tbt, none of the edges can be annihilated by gluing a block from Table \ref{BOU} to any graph. If \tbt is glued to an existing graph, causing $\overline{uw}$ to be annihilated, nodes $u,v$ are still connected via $\overline{uv}$ and $\overline{vw}$. Therefore, gluing new blocks will not break connectivity.

Let $\mathcal{U}$ be the collection of both old and new blocks.
In order to find $\mathcal{S}_o$ of a node $o$, it suffices to check if node $o$ has a neighborhood that is isomorphic to some graph in $\mathcal{U}$ or obtained by gluing two blocks from $\mathcal{U}$. By checking all nodes in $\mathcal{U}$, we analyze the results in Table \ref{n=4}-\ref{m=3}.

If any of the neighborhoods in $\mathcal{U}$ is a disjoint connected component, the algorithm stops. If the neighborhood may not be a disjoint connected component (DCC), we apply suitable replacement as suggested in the tables below. Note that we need those replacements to be consistent, i.e. the original graph is $s$-decomposable if and only if the new graph is. The consistency of all replacements can be checked by exhausting analysis of all neighborhoods in $\mathcal{U}$ and Lemma 1 in \cite{WG}.
\subsection{$n=4$}
We have the following two situations:
\begin{longtable}{|c|c|c|}\hline
& \textbf{A} & \textbf{B} \\\hline
Decomposition  &  \includegraphics{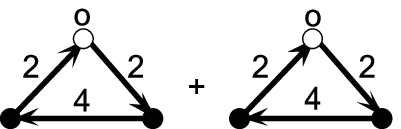}  &  \includegraphics{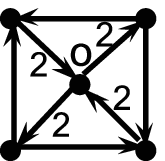}\\\hline
Degree of $o$  &  4                            &   4                         \\\hline
Replacement       & DCC &  DCC                  \\\hline
\caption{$n=4$}\label{n=4}
\end{longtable}

\subsection{n=3}
We have the following eight situations:
\pagebreak
\begin{longtable}{|c|c|c|}\hline
& \textbf{A1} & \textbf{A2} \\\hline
Decomposition  &  \includegraphics{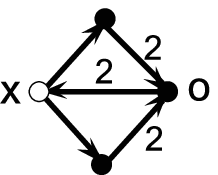}  &  \includegraphics{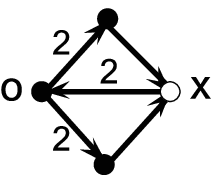}\\\hline
Degree of $o$  &  3                            &   3                         \\\hline
Replacement       &  \includegraphics{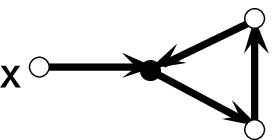} &  \includegraphics{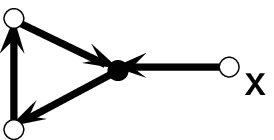}\\\hline\hline
& \textbf{B1} & \textbf{B2} \\\hline
Decomposition  &  \includegraphics{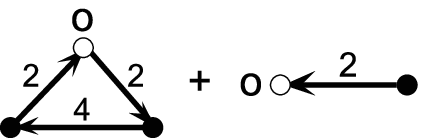}  &  \includegraphics{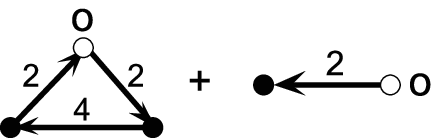}\\\hline
Degree of $o$  &  3                            &   3                         \\\hline
Replacement       &         DCC                   &          DCC                \\\hline\hline
& \textbf{C1} & \textbf{C2} \\\hline
Decomposition  &  \includegraphics{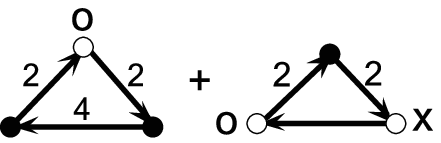}  &  \includegraphics{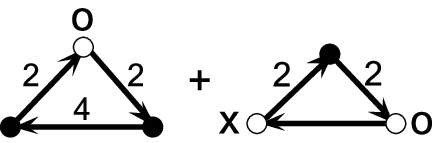}\\\hline
Degree of $o$  &  4                            &   4                         \\\hline
Replacement       &  \includegraphics{rpright.eps}& \includegraphics{rpleft.eps}\\\hline\hline
& \textbf{D1} & \textbf{D2} \\\hline
Decomposition  &  \includegraphics{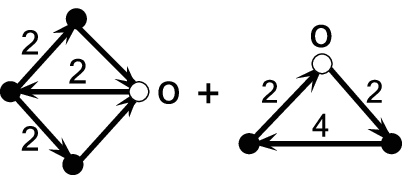}  &  \includegraphics{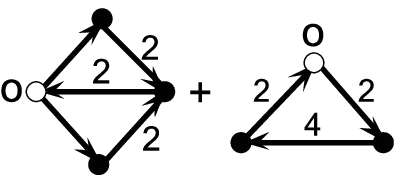}\\\hline
Degree of $o$  &  5                            &   5                         \\\hline
Replacement       &         DCC                   &          DCC                \\\hline

\caption{n=3}\label{n=3}
\end{longtable}
In this case the degree of the considered node can only be $3,4$ or $5$, otherwise the graph is not $s$-decomposable. For a given graph $G$, to determine neighborhood $o$ of what type is considered, we examine the degree of $o$.

First suppose the degree of $o$ is 3. We only need to consider \textbf{A1,A2,B1,B2}. We call the nodes connected to $o$ by an edge \textit{boundary nodes}. If one of the boundary nodes, denoted by $x$, is connected to the remaining two by edges of weight 1, $o$ can only be contained in \textbf{A1} or \textbf{A2}. Note that in either cases, the degree of $x$ is no less than 3, and the degrees of the remaining two boundary nodes are 2. If only two of the boundary nodes are connected, $o$ can only be contained in \textbf{B1} or \textbf{B2}. In both cases, the neighborhoods are disjoint connected components.

Second, suppose the degree of $o$ is 4. Node $o$ can only be contained in \textbf{C1,C2}, otherwise the graph is not $s$-decomposable. Note that in this case, one boundary node is connected to $o$ by an edge of weight 1. Denote this node by $x$. The remaining three boundary nodes are connected to $o$ by edges of weight 2, two of them are connected by an edge of weight 4, the third one is connected to $x$ by an edge of weight 2. Moreover, the degree of $x$ is no less than 2, the remaining boundary nodes all have degree 2.

Finally, suppose the degree of $o$ is 5. In this case, $o$ can only be contained in \textbf{D1,D2}, otherwise the graph is not $s$-decomposable. In either case, the neighborhood is a disjoint connected component.

\subsection{$n=2$}
We have the following 13 situations:

\begin{longtable}{|c|c|c|}\hline
& \textbf{A} & \textbf{B} \\\hline
Decomposition & \input{n=2_1.tex} &\includegraphics{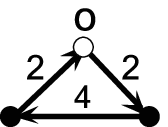}\\\hline
Degree of $o$ & 2                           &  $\geq2$                            \\\hline
Replacement      & \includegraphics{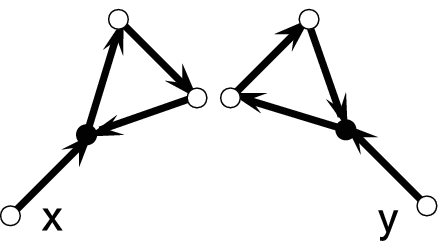}&\includegraphics{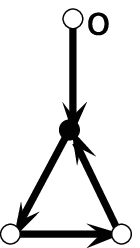}\\\hline\hline
& \multicolumn{2}{|c|}{\textbf{C}}\\\hline
&\multicolumn{2}{|c|}{}\\
Decomposition & \multicolumn{2}{|c|}{\input{n=2_3.tex}\input{n=2_3_1.tex}\input{n=2_3_2.tex}}\\&\multicolumn{2}{|c|}{}\\\hline
Degree of $o$ & \multicolumn{2}{|c|}{2}\\\hline
Replacement   & \multicolumn{2}{|c|}{DCC}\\\hline\pagebreak\hline
& \textbf{D1} & \textbf{D2} \\\hline
Decomposition & \input{n=2_4.tex} &\input{n=2_5.tex}\\\hline
Degree of $o$ & 3                           &  3                            \\\hline
Replacement      & \includegraphics{rpright.eps}&\includegraphics{rpright.eps}\\\hline\hline
& \textbf{E1} & \textbf{E2} \\\hline
Decomposition & \input{n=2_6.tex} &\input{n=2_7.tex}\\\hline
Degree of $o$ & 4                           &  4                            \\\hline
Replacement      & DCC                         &  DCC                          \\\hline
& \textbf{E3} & \textbf{E4} \\\hline
Decomposition & \input{n=2_8.tex} &\input{n=2_9.tex}\\\hline
Degree of $o$ & 4                           &  4                            \\\hline
Replacement      & DCC                         &  DCC                          \\\hline\hline
& \textbf{F1} & \textbf{F2} \\\hline
Decomposition & \input{n=2_10.tex} &\input{n=2_11.tex}\\\hline
Degree of $o$ & 5                           &  5                            \\\hline
Replacement      &  \includegraphics{rpright.eps} & \includegraphics{rpleft.eps} \\\hline\pagebreak\hline
& \textbf{F3} & \textbf{F4} \\\hline
Decomposition & \input{n=2_12.tex} &\input{n=2_13.tex}\\\hline
Degree of $o$ & 5                           &  5                            \\\hline
Replacement   &  \includegraphics{rpright.eps} & \includegraphics{rpleft.eps}\\\hline

\caption{n=2}\label{n=2}
\end{longtable}
To determine if the considered node $o$ is contained in any of the neighborhoods in Table \ref{n=2}, we consider the degree of $o$. Note that when $n=2$, the degree of node $o$ in Table \ref{n=2} takes only $2,3,4$ or $5$, otherwise, the graph is not $s$-decomposable.

If the degree of $o$ is 2, $o$ in $s$-decomposable graph can have only neighborhoods of type \textbf{A,B,C}, (see Table \ref{n=2}.) In this case we denote the other endpoints of the edges of weight 2 by $x,y$.
\begin{itemize}
\item If $x,y$ are not connected by an edge, the graph is $s$-decomposable in two cases. First, the neighborhood can be obtained by gluing \{\lws,\rws\}, or two \lws, or two \rws, as shown in case \textbf{C}; second, the neighborhood can be obtained from annihilating edge $\overleftarrow{xy}$ in \tbt, as shown in case \textbf{A}. To determine how the neighborhood is obtained, note that in the first case, the neighborhood is a disjoint connected component, and in the second case, the edge $\overleftarrow{xy}$ can be annihilated by an edge from a spike, a triangle or the mid-edge of a diamond. Suppose the degrees of nodes $x,y$ are both 1. If edge $xo$ and $yo$ are both directed away from or towards node $o$, then the neighborhood is obtained in the way shown in the second or third picture in case \textbf{C}, otherwise the graph is not $s$-decomposable. Suppose two edges have different orientations, there are only two cases when the graph is $s$-decompositions. If the degree of nodes $x,y$ are both greater than 1, the graph is decomposable only if the neighborhood is obtained by gluing another block to \tbt. Then we can apply the corresponding replacement as shown in case \textbf{A}. If the degree of nodes $x,y$ are both 1, the neighborhood is a DCC and can be obtained either by annihilating $\overline{xy}$ in \tbt by gluing a spike, or by gluing \{\lws,\rws\}.
\item If $x,y$ are connected by an edge of weight 4 from $y$ to $x$, there are two cases. First, the neighborhood can be obtained from gluing an edge $\overrightarrow{yx}$ to the graph as in case \textbf{A}. In this situation, the degrees of nodes $x,y$ are at least 2; Second, the neighborhood can be obtained from case \textbf{B}. Therefore, to distinguish the above two cases, we first check the degrees of $o$. If $o$ has degree greater than 2, $o$ is contained in a neighborhood as shown in case \textbf{B}. If $o$ has degree 2, we check the degrees of $x,y$: if the degrees of both $x,y$ are two, the neighborhood is a disjoint connected component and has two possible decompositions; if the degrees of both $x,y$ exceed 2, $o$ must be contained in a neighborhood in \textbf{A}. In latter case, after applying the corresponding replacement, we keep edge $\overline{xy}$ and change its weight from 4 to 1.
\item If $x,y$ are connected by an edge of weight one from $y$ to $x$, the neighborhood can only be obtained from case \textbf{A}.
\end{itemize}

Next suppose the degree of $o$ is 3. In this case node $o$ can only be contained in a neighborhood shown in \textbf{B,D1} or \textbf{D2}, otherwise the graph is not $s$-decomposable. To distinguish these cases, first we check if the nodes connected to $o$ by an edge of weight 2 are connected by an edge of weight 4. If so, $o$ must be contained in \textbf{B}. If not, we denote the three nodes connected to $o$ by $x,y,z$, where $y,z$ are connected to $o$ by edges with weight 2. Note that $x$ must be connected to $o$ via an edge with weight 1, $x,y$ must be connected via an edge $\overrightarrow{yx}$ with weight 2, degrees of $y$ and $z$ must be 2 otherwise the graph is not $s$-decomposable. Node $o$ is contained in a neighborhood shown as in \textbf{D1} if $o,z$ are connected via an edge $\overrightarrow{oz}$, \textbf{D2} if via $\overrightarrow{za}$.

Next suppose the degree of $o$ is 4. In this case node $o$ can only be contained in a neighborhood shown in one of \textbf{B}, \textbf{E1}-\textbf{E4}, otherwise the graph is non $s$-decomposable. By the same argument as in this previous case, we check if $o$ is contained in \textbf{B}. If not, we need to determine if $o$ is contained in any of \textbf{E1}-\textbf{E4}. Denote the nodes that are connected to $o$ by edges with weight 1 by $y,z$, the nodes that are connected to $o$ by edges with weight 2 by $x,w$. Then $y,z$ must both be connected to one of the nodes that are connected to $o$. Assume $y,z$ are both connected to $x$, then $\overline{xy},\overline{xz}$ both have weight 2. By picture \textbf{E1}-\textbf{E4}, $\overline{ow}$ must have weight 2, and the degree of $w$ is 2. By examination of the orientation of the edges incident to $o$ we determine in with type of neighborhood $o$ is contained.

Finally, if the degree of $o$ is 5, it can be only contained in a neighborhood shown in one of \textbf{B}, \textbf{F1}-\textbf{F4}, otherwise the graph is non $s$-decomposable. As above, we check if $o$ is contained in \textbf{B}. If not, we denote the boundary nodes of $o$ by $x,y,z,u,v$, where $x,y,z$ are connected to $o$ by edges of weight 1, $u,v$ are connected to $o$ by edges of weight 2. Note that if the graph is $s$-decomposable, one of $u,v$ must be connected to two nodes among $x,y,z$ by edges of weight 2. Assume $u$ is connected to $y$ and $w$ by edges of weight 2. Then, $v$ must be connected to $x$ by another edge of weight 2, and $\mbox{deg}(y)=\mbox{deg}(w)=\mbox{deg}(v)=2$, $\mbox{deg}(u)=3$, $\mbox{deg}(x)\geq2$.

\subsection{$n=1$}
In this case, there is only one edge with weight $2$ that is incident to $o$. Denote the other endpoint of this edge by $p$. We consider the number of edges with weight $2$ incident to $p$. Denote this number by $m$.

If $m=1$, there are two cases, as shown Table. \ref{m=1}. We can only attach blocks containing no edge of weight two to the node $p$. In both cases, degree of $o$ is one. It is easy to determine if $o$ is contained in \textbf{A1} or \textbf{A2}.\pagebreak

\begin{longtable}{|c|c|c|}\hline
& \textbf{A1} & \textbf{A2} \\\hline
Decomposition  &  \includegraphics{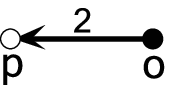}  &  \includegraphics{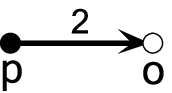}\\\hline
Degree of $p$  &  $\geq1$                            & $\geq1$                         \\\hline
Replacement    &  \includegraphics{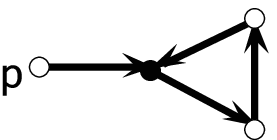}& \includegraphics{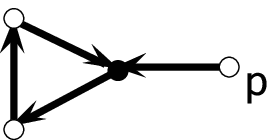}\\\hline

\caption{$m=1$}\label{m=1}
\end{longtable}

If $m=2$, there are ten possible cases, as shown in Table \ref{m=2}. In either of the cases \textbf{A1,A2}, we can only attach to $o$ blocks containing no edge of weight 2. Hence after applying the corresponding replacement, there is no edge with weight 2 that is incident to $o$. In case \textbf{B1} or \textbf{B2}, we can only attach to $p$ blocks containing no edge of weight 2.

\begin{longtable}{|c|c|c|}\hline
& \textbf{A1} & \textbf{A2} \\\hline
Decomposition  &  \input{m=2_1.tex}    &  \input{m=2_2.tex}\\&&\\\hline
Degree of $p$  &  $2$                            & $2$                         \\\hline
Replacement    &  \includegraphics{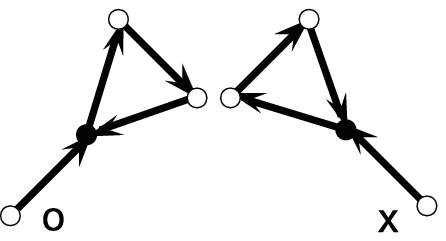}  & \includegraphics{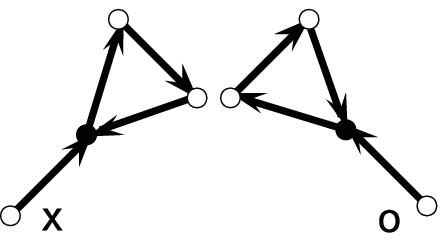}\\\hline\hline
& \textbf{B1} & \textbf{B2} \\\hline
Decomposition  &  \input{m=2_3.tex}    &  \input{m=2_4.tex}\\\hline
Degree of $p$  &  $\geq2$                        & $\geq2$                         \\\hline
Replacement    &  \includegraphics{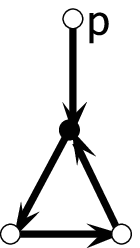}    &  \includegraphics{rpm=2_3_4}\\\hline\pagebreak\hline
& \textbf{C1} & \textbf{C2} \\\hline
Decomposition  &  \input{m=2_5.tex}  &  \input{m=2_6.tex}\\\hline
Degree of $p$  &  $2$                          & $2$                         \\\hline
Replacement       &  DCC                          & DCC                          \\\hline\hline
& \textbf{D1} & \textbf{D2} \\\hline
Decomposition  &  \input{m=2_7.tex}  &  \input{m=2_8.tex}\\\hline
Degree of $p$  &  $3$                          & $3$                         \\\hline
Replacement    &  \input{rreplacey.tex}&  \input{rreplacey.tex}\\\hline\hline
& \textbf{D3} & \textbf{D4} \\\hline
Decomposition  &  \input{m=2_9.tex}  &  \input{m=2_10.tex}\\\hline
Degree of $p$  &  $3$                          & $3$                         \\\hline
Replacement    &  \input{rreplacey.tex}& \input{rreplacey.tex}\\\hline

\caption{$m=2$}\label{m=2}
\end{longtable}

Table \ref{m=2} gives all possible cases with $m=2$. To determine the type of neighborhood $o$ is contained in, let us denote the node connected to $o$ by an edge of weight 2 by $p$, then examine the degree of $p$. According to Table \ref{m=2}, if the graph is $s$-decomposable, $\mbox{deg}(p)\geq2$.

Suppose the degree of $p$ is 2, we denote the other node that is connected to $p$ by $x$. Note that the weight of $\overline{px}$ must be 2. If $x$ is connected to $o$ by an edge with weight 4, then $o$ must be contained in neighborhood \textbf{B1} or \textbf{B2} depending on the orientation of edges. Note that in this case, the graph is a disjoint connected component. If $\overline{xo}$ has weight 1, then $o$ is contained in neighborhood \textbf{A1} or \textbf{A2} depending on the orientation of edges. If $x$ is not connected to $o$, then the graph must be a disjoint connected component \textbf{C1} or \textbf{C2}.

Next, suppose the degree of $p$ is 3. By Table \ref{m=2}, there are two edges with weight 2 that are incident to $p$, one of which is $\overline{op}$. Denote the other edge of weight 2 by $\overline{ox}$ (the other endpoint is $x$). If $x$ is connected to $o$ by an edge of weight 4, then $o$ is contained in neighborhood \textbf{B1} or \textbf{B2}. If $x$ is not connected to $o$, then $o$ lies in neighborhood \textbf{D1}-\textbf{D4}. Note that in the latter case, the degree of $o$ is 1. If it is neither of the above two situation, the graph is not $s$-decomposable.

Finally if the degree of $p$ is greater than 3, $o$ must be contained in neighborhood \textbf{B1} or \textbf{B2}.

If $m=3$, there are fourteen cases, as shown in Table.\ref{m=3}.

\begin{longtable}{|c|c|c|}\hline
& \textbf{A1} & \textbf{A2} \\\hline
Decomposition  &  \includegraphics{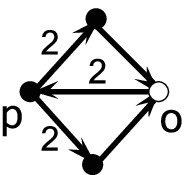}  &  \includegraphics{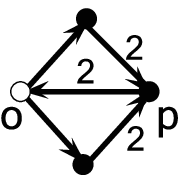}\\\hline
Degree of $p$  &  $3$                           & $3$                         \\\hline
Replacement    &  \includegraphics{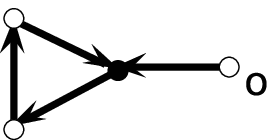}& \includegraphics{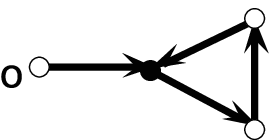}\\\hline\hline
& \textbf{B1} & \textbf{B2} \\\hline
Decomposition  &  \input{m=3_3.tex}  &  \input{m=3_4.tex}\\&&\\\hline
Degree of $p$  &  $3$                            & $3$                         \\\hline
Replacement       &  DCC                            & DCC                         \\\hline\hline
& \textbf{B3} & \textbf{B4} \\\hline
Decomposition  &  \input{m=3_5.tex}  &  \input{m=3_6.tex}\\&&\\\hline
Degree of $p$  &  $3$                          & $3$                         \\\hline
Replacement       & DCC                           & DCC                         \\\hline\hline
& \textbf{C1} & \textbf{C2} \\\hline
Decomposition  &  \includegraphics{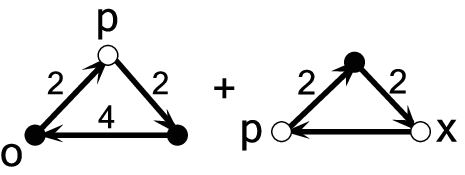}  &  \includegraphics{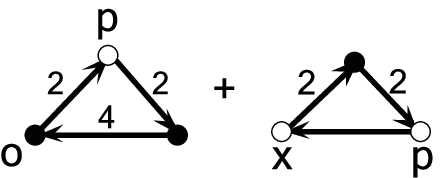}\\\hline
Degree of $p$  &  $4$                            & $4$                         \\\hline
Replacement       & \includegraphics{rpright.eps}& \includegraphics{rpleft.eps}\\\hline\pagebreak\hline
& \textbf{C3} & \textbf{C4} \\\hline
Decomposition  &  \includegraphics{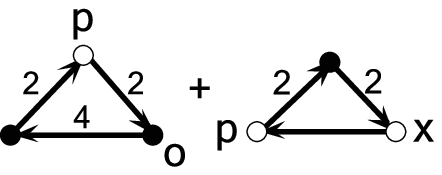}  &  \includegraphics{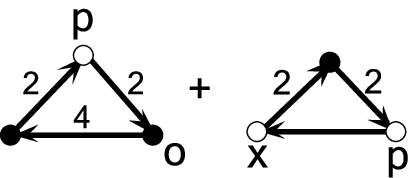}\\\hline
Degree of $p$  &  $4$                            & $4$                         \\\hline
Replacement       & \includegraphics{rpright.eps}& \includegraphics{rpleft.eps}\\\hline\hline
& \textbf{D1} & \textbf{D2} \\\hline
Decomposition  &  \input{m=3_11.tex}  &  \input{m=3_12.tex}\\&&\\\hline
Degree of $p$  &  $5$                           & $5$                          \\\hline
Replacement       &  DCC                           & DCC                          \\\hline\hline
& \textbf{D3} & \textbf{D4} \\\hline
Decomposition  &  \input{m=3_13.tex}  &  \input{m=3_14.tex}\\\hline
Degree of $p$  &  $5$                           & $5$                         \\\hline
Replacement       &  DCC                           & DCC                         \\\hline

\caption{m=3}\label{m=3}
\end{longtable}

Note that the degree of node $o$ in all pictures is 2 except \textbf{A1} and \textbf{A2}. Therefore, if the considered node has degree larger than 2, it can only be contained in neighborhood \textbf{A1} or \textbf{A2}. In both cases, there are two nodes, denoted by $x,y$, that are connected to $o$ by edges of weight 1, and $p$ is connected to both nodes $x$ and $y$ by edges of weight 2. Moreover, the degree of $p$ is 3, the degrees of $x,y$ are both 2. Suppose $o$ has degree 2, according to Table \ref{m=3}, $\mbox{deg}(p)=3,4$ or $5$.

First, suppose $\mbox{deg}(p)=3$. $o$ can only be contained in a neighborhood shown in \textbf{B1,B2,B3} or \textbf{B4}. Note that in all these cases, all edges incident to $p$ has weight 2, and the graph is a disjoint connected component.

Next, suppose $\mbox{deg}(p)=4$. Then $o$ can only be contained in neighborhoods of type \textbf{C1,C2,C3} or \textbf{C4}. In all these cases, $p$ is incident to four edges of weight 2. Also $p$ is connected to a node which is also connected to $o$ by an edge of weight 4. Among the four nodes connected to $p$, three of them, including $o$, have degree 2, the remaining node has degree no less than 2. We can check the orientations of all edges to determine which neighborhood $o$ is contained in.

Finally, suppose $\mbox{deg}(p)=5$. In this case, $o$ can only be contained in neighborhood \textbf{D1}-\textbf{D4}, and the graph is a disjoint connected component. In all these cases, $p$ is incident to three edges of weight 2. Denote the other endpoints of these edges besides $o$ by $x,y$. Node $p$ is also incident to two edges of weight 1. Denote the other endpoints of these two edges by $z,w$. According to Table \ref{m=3}, $z,w$ must both be connected to one of $x,y$ by edges of weight 2. Assume it is $y$. Then $o$ is connected to $x$ by an edge of weight 4. Note that in this case $\mbox{deg}(y)=3$, $\mbox{deg}(z)=\mbox{deg}(w)=\mbox{deg}(x)=2$. We can determine which neighborhood $o$ is contained in by examining the orientations of the edges.

\section{Summary}

In Section \ref{algorithm}, we exhausted all nodes that are incident to some edges with weight 2. We also replace a neighborhood of any such node by a consistent one which does not contain any edge with weight 2. Therefore, for any given weighted graph, we can determine if it is $s$-decomposable, and simplify it into a graph containing only edges with weight 1 or 4. Then we apply the algorithm in \cite{WG} to determine if it is block decomposable. Note that every node is examined at most twice: once in the procedure as in \ref{algorithm}, once in the algorithm in \cite{WG}. Hence the algorithm is linear in the size of the given graph.

Apply the algorithm to Theorem \ref{thm_mutation_finite}, we get the following corollary:

\begin{crl}
Given a skew-symmetrizable matrix $B$, there exists an algorithm linear in the size of $B$ to determine if $B$ has finite mutation type.
\end{crl}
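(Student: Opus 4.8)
\noindent\emph{Plan of proof.} The plan is to assemble the classification results recalled above with the decomposition procedure of Section~\ref{algorithm} and the linear algorithm of \cite{WG}, and then to bound the number of operations performed. First I would treat the degenerate cases: if $n\le 2$ the matrix is automatically of finite mutation type, so assume $n\ge 3$. Reading off from $B$ whether it is skew-symmetric takes time linear in the size of $B$. If $B$ \emph{is} skew-symmetric, form the associated quiver $G$, run the algorithm of \cite{WG} to decide whether $G$ is block-decomposable, and — in the one remaining case — test whether $G$ lies in the (finite) mutation class of each of the $11$ exceptional quivers of \cite{FST1}; since each such class is finite this costs a bounded number of operations, and by the theorem of \cite{FST1} the outcome decides finiteness of the mutation type of $B$.

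If $B$ is skew-symmetrizable but not skew-symmetric, form the associated diagram $S$. By Theorem~\ref{thm_mutation_finite}, $B$ has finite mutation type if and only if $S$ is $s$-decomposable or $S$ is mutation-equivalent to one of the seven exceptional diagrams; the latter is again a bounded check, since each of the seven mutation classes is finite. To decide $s$-decomposability I would run the procedure of Section~\ref{algorithm}. Scan the vertices of $S$ that are incident to at least one edge of weight $2$; for each such vertex $o$ let $n$ be the number of weight-$2$ edges at $o$, so that $n\in\{1,2,3,4\}$ unless the graph is already seen not to be $s$-decomposable. Processing these vertices in decreasing order of $n$, match the local picture around $o$ — the degrees of $o$ and of its neighbours, and the multiplicities and orientations of the incident edges — against the cases listed in Tables~\ref{n=4}--\ref{m=3}; either this certifies that $\mathcal{S}_o=\emptyset$ (so $S$ is not $s$-decomposable), or it exhibits a decomposable neighbourhood of $o$, which we then replace by the indicated \emph{consistent} neighbourhood, one that contains no edge of weight $2$. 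When all weight-$2$-incident vertices are exhausted, the resulting graph $G'$ has only edges of weight $1$ or $4$ and is $s$-decomposable if and only if it is block-decomposable (see \cite{WG}); we finish by calling the algorithm of \cite{WG} on $G'$.

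For the complexity bound: constructing $S$ from $B$ is linear in the size of $B$, and the finitely many exceptional-type checks cost $O(1)$ since the classes involved are finite. In the main loop each vertex is visited a bounded number of times (once in the weight-$2$ elimination pass and once in the \cite{WG} pass), and each visit inspects only a bounded-radius neighbourhood of $o$ and performs a local replacement that deletes the weight-$2$ edges at $o$ without creating new ones; hence the total work is linear in the size of $B$. Correctness of the reduction follows from Theorems~\ref{thm_unfolding}, \ref{thm_decomposable} and \ref{thm_mutation_finite} together with their skew-symmetric counterparts in \cite{FST1}, and the corollary is then immediate.

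The step I expect to be the real obstacle — and the one already discharged in the body of Section~\ref{algorithm} — is verifying that the case list over all vertices incident to weight-$2$ edges is genuinely exhaustive and that every prescribed replacement is \emph{consistent}, i.e. that the new graph is $s$-decomposable exactly when the original one is. This rests on the exhaustive inspection of all neighbourhoods obtained from gluings of (at most two) blocks in $\mathcal{U}$ together with Lemma~1 of \cite{WG}; granting it, the linear-time decision procedure above is exactly the algorithm asserted by the corollary.
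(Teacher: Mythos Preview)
Your proposal is correct and follows essentially the same route as the paper's proof: first dispose of the finitely many exceptional mutation classes at bounded cost, then run the weight-$2$ elimination pass of Section~\ref{algorithm} followed by the algorithm of \cite{WG}, and observe that each vertex is inspected a bounded number of times. You are in fact slightly more careful than the paper in explicitly splitting the skew-symmetric case (eleven exceptional types, \cite{FST1}) from the strictly skew-symmetrizable case (seven types, Theorem~\ref{thm_mutation_finite}), whereas the paper's proof only invokes the seven; your formulation is the cleaner one here.
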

\begin{proof}
Assume the size of $B$ is no less than 3. First, we check if $B$ is mutation-equivalent to one of the seven exceptional types in Theorem \ref{thm_mutation_finite}. If so, $B$ is mutation finite. Since the sizes of all seven types do not exceed 6, it only takes finite number of operation. If none of the seven types is mutation equivalent to $B$, we apply our algorithm to the associated adjacency graph of $B$. By the previous argument, the number of operation it requires is linear in the size of $B$. If the adjacency graph is confirmed to be $s$-decomposable, $B$ has finite mutation type.
\end{proof}

\begin{rmk}
If diagram $G$ is $s$-decomposable, our algorithm can recover the blocks used to obtain $G$ since every step of replacement is consistent. In particular, we can determine the ideal tagged triangulation of bordered surfaces with marked points to each decomposition.
\end{rmk}
\begin{rmk}
A connected diagram $G$ has non-unique decomposition if and only if $G$ is isomorphic to one of the two diagrams in Figure \ref{nu}.
\end{rmk}
\begin{figure}[bcth]
\renewcommand{\captionlabeldelim}{}
\centering
\input{nonunique1.tex}
\input{nonunique2.tex}
\caption{}\label{nu}
\end{figure}
\bibliography{unfolding}{}

\begin{thebibliography}{1}

\bibitem{WG}
W.~Gu.
\newblock A decomposition algorithm for the oriented adjacency graph of the
  triangulations of a bordered surface with marked points.

\bibitem{FST}
Anna Felikson, Michael Shapiro, and P.~Tumarkin.
\newblock Cluster algebras of finite mutation type via unfoldings.
\newblock {\em arXiv:1006.4276v3}.

\bibitem{FST1}
A.~Felikson, M.~Shapiro, and P.~Tumarkin.
\newblock Skew-symmetric cluster algebras of finite mutation type.
\newblock {\em arXiv:0811.1703}.

\bibitem{FST2}
S.~Fomin, M.~Shapiro, and D.~Thurston.
\newblock Cluster algebras and triangulated surfaces. part i: Cluster
  complexes.
\newblock {\em Acta Math. 201 (2008), 83–146}.

\bibitem{FZ2}
S.~Fomin and A.~Zelevinsky.
\newblock Cluster algebras ii: Finite type classification.
\newblock {\em Invent. Math. 154 (2003),63-121}.

\bibitem{L}
G.~Lusztig.
\newblock Introduction to quantum groups.
\newblock {\em Progr. Math. Vol. 110, Birkhauser, Boston, 1993}.

\end{thebibliography}
\bibliographystyle{unsrt}

\end{document}